\renewcommand\MR[1]{\relax} 
\newtheorem{thm}{Theorem}[section] 
\numberwithin{equation}{section}
\newtheorem{lem}[thm]{Lemma}
\newtheorem{prop}[thm]{Proposition}
\theoremstyle{definition}
\theoremstyle{remark}
\newtheorem{mycomment}[thm]{Comment}
{\end{mycomment}\endgroup}
\DeclareMathSymbol{\rtimes}{\mathbin}{AMSb}{"6F}
\newcommand\K{\mathcal{K}}
\def\labelenumi{\textnormal{(\@alph\c@enumi)}}
\def\theenumi{\@alph \c@enumi}
\def\labelenumii{\textnormal{(\@roman\c@enumii)}}
\def\theenumii{\@roman \c@enumii}
\def\alphapart#1{\charno=96
\advance\charno by#1\char\charno}
\def\<{\langle}
\def\>{\rangle}
\let\ipscriptstyle=\scriptscriptstyle
\def\lipsqueeze{{\mskip -3.0mu}}
\def\ripsqueeze{{\mskip -3.0mu}}
\def\ipcomma{\nobreak\mathrel{,}\nobreak}
\newbox\ipstrutbox
\def\ipstrut{\copy\ipstrutbox}
\def\lip#1<#2,#3>{\mathopen{\relax_{\ipstrut\ipscriptstyle{
#1}}\lipsqueeze
\langle} #2\ipcomma #3 \rangle}
\def\blip#1<#2,#3>{\mathopen{\relax_{\ipstrut
\ipscriptstyle{ #1}}\lipsqueeze\bigl\langle} #2\ipcomma #3 \bigr\rangle}
\def\rip#1<#2,#3>{\langle #2\ipcomma #3
\rangle_{\ripsqueeze\ipstrut\ipscriptstyle{#1}}}
\def\brip#1<#2,#3>{\bigl\langle #2\ipcomma #3
\bigr\rangle_{\ripsqueeze\ipstrut\ipscriptstyle{#1}}}
\def\angsqueeze{\mskip -6mu}
\def\smangsqueeze{\mskip -3.7mu}
\def\trip#1<#2,#3>{\langle\smangsqueeze\langle #2\ipcomma #3
\rangle\smangsqueeze\rangle_{\ripsqueeze\ipstrut\ipscriptstyle{#1}}}
\def\btrip#1<#2,#3>{\bigl\langle\angsqueeze\bigl\langle #2\ipcomma
#3
\bigr\rangle
\angsqueeze\bigr\rangle_{\ripsqueeze\ipstrut\ipscriptstyle{#1}}}
\def\tlip#1<#2,#3>{\mathopen{\relax_{\ipstrut\ipscriptstyle{
#1}}\lipsqueeze \langle\smangsqueeze\langle} #2\ipcomma #3
\rangle\smangsqueeze\rangle}
\def\btlip#1<#2,#3>{\mathopen{\relax_{\ipstrut\ipscriptstyle{
#1}}\lipsqueeze
\bigl\langle\angsqueeze\bigl\langle} #2\ipcomma #3
\bigr\rangle\angsqueeze\bigr\rangle}
\def\ip(#1|#2){(#1\mid #2)}
\def\bip(#1|#2){\bigl(#1 \mid #2\bigr)}
\def\Bip(#1|#2){\Bigl( #1 \bigm| #2 \Bigr)}
\newcommand{\axg}{A\rtimes_\alpha G\rtimes_{\what\alpha} G}
\newcommand{\adg}{A\rtimes_\delta G\rtimes_{\what\delta} G}
\newcommand\I{\mathscr{I}}
\newcommand{\ad}{\operatorname{Ad}}
\newcommand{\id}{\operatorname{id}}
\newcommand{\ibm}{imprimitivity bimodule}
\renewcommand{\bar}{\overline}
\newcommand{\spn}{\operatorname{span}}
\newcommand{\clspn}{\bar{\spn}\,}
\newcommand{\inv}{^{-1}}
\newcommand{\what}{\widehat}
\newcommand{\wilde}{\widetilde}
\newcommand{\xt}{\otimes}
\renewcommand{\)}{\textup{)}}
\newcommand{\cst}{\ensuremath{C^*}}
\newcommand{\csta}{\cst-algebra}
\newcommand{\cstg}{\ensuremath{\cst(G)}}
\newcommand{\cog}{\ensuremath{C_0(G)}}
\newcommand{\covrep}{covariant representation}
\newcommand{\rep}{representation}
\newcommand{\ltwo}{\ensuremath{L^2}}
\newcommand{\thmref}[1]{Theorem~\ref{#1}}
\newcommand{\secref}[1]{Section~\ref{#1}}
\newcommand{\dAK}{\epsilon}
\begin{document}
\begin{abstract}
  We present a new method of establishing a bijective correspondence
 --- in fact, a lattice isomorphism --- 
  between action- and coaction-invariant ideals of $C^*$-algebras and
  their crossed products by a fixed locally compact group.  It
  is known that such a correspondence exists whenever the
  group is amenable; our results hold for any locally compact group
  under a natural form of coaction invariance.
\end{abstract}

\title[The ladder technique]{Bijections between sets of invariant ideals,
via the ladder technique}

\author[Gillespie]{Matthew Gillespie}
\address{School of Mathematical and Statistical Sciences
\\Arizona State University
\\Tempe, Arizona 85287}
\email{mjgille1@asu.edu}

\author[Kaliszewski]{S. Kaliszewski}
\address{School of Mathematical and Statistical Sciences
\\Arizona State University
\\Tempe, Arizona 85287}
\email{kaliszewski@asu.edu}

\author[Quigg]{John Quigg}
\address{School of Mathematical and Statistical Sciences
\\Arizona State University
\\Tempe, Arizona 85287}
\email{quigg@asu.edu}

\author[Williams]{Dana P. Williams}
\address{Department of Mathematics\\ Dartmouth College \\ Hanover, NH
  03755-3551 USA}
\email{dana.williams@Dartmouth.edu}

\date{May 21, 2024}

\subjclass[2000]{Primary  46L55}
\keywords{
action, 
coaction, 
crossed product duality, 
ideal,
Morita equivalence}

\maketitle

\noindent

\section{Introduction}\label{sec:intro}

One of the most fundamental things to know about a \csta\ is its ideal
structure. If the \csta\ arises as the crossed product
$A\rtimes_\alpha G$ of an action $(A,G,\alpha)$, it is natural to
compare the ideal structures of $A$ and $A\rtimes_{\alpha}G$.
  This is most easily done when we restrict attention to ideals that
  are related in some way to the action of~$G$. More precisely, we
focus on ideals $I$ of $A$ that are
$\alpha$-invariant. Then the crossed product  is (isomorphic to) an
ideal $I\rtimes_\alpha G$ of $A\rtimes_\alpha G$,
and the obvious question is which ideals
of $A\rtimes_\alpha G$ arise in this way. It turns out 
that they are
precisely those ideals that are invariant under the dual coaction
$\what\alpha$, and there is a bijection between the two sets
of invariant ideals. When $G$ is amenable this is an old result of
Gootman and Lazar (\cite{gootman}), and in this paper we prove it in complete generality (Theorem~\ref{ladder thm}\,(a)). 
Dually, starting with a coaction $(A,\delta)$ of $G$, the
$\delta$-invariant ideals of $A$ correspond to $\what\delta$-invariant
ideals of the crossed product $A\rtimes_\delta G$. Again,
Gootman--Lazar proved this for amenable $G$, and in this paper we
prove it in general (Theorem~\ref{ladder thm}\,(b)).  Nilsen (\cite{nilsen}) 
has a related pair of results, but their
relationship to ours is complicated because she used a different notion of
coaction invariance than we do.

Perhaps more important, though, is our method of proof---we
introduce what we call the ``ladder technique'', which leads to quick
proofs of the aforementioned bijections, using only crossed-product
duality and the Rieffel correspondence between ideals of Morita
equivalent \csta s.  We expect the ladder technique to be useful in
other situations; for example three of us are currently working on an
application to ideals of Fell bundles over groupoids.

We begin in \secref{sec:prelim} with a detailed overview of the
crossed-product duality theorems we need, the essential definitions
and facts regarding invariant ideals, and an identification
of the associated Rieffel correspondences.  Then in
\secref{sec:ladder} we prove our main theorem, which actually
comprises four versions: starting with actions we can consider either full or
reduced crossed products, and dually we can start with
either maximal or normal coactions.  Finally, in
\secref{sec:conclusion} we close with a brief discussion comparing our
results to those of Gootman--Lazar and Nilsen.  And we point out that
one of our theorems was proved, using somewhat more technical methods
(unrelated to the ladder technique), in a recent paper two of us wrote
with Tron Omland.

\section{Preliminaries}\label{sec:prelim}

Below we will recall suitable versions of the Imai--Takai and Katayama
duality theorems for crossed products.  But to prepare for this we
start with some background on actions and coactions. 
Further details about action crossed products can be found in
  \cite{danacrossed}. Our main references for coactions and their crossed
  products are \cite[Appendix~A]{enchilada}, \cite{maximal},
  \cite{raeburn}, \cite{clda}, \cite{klqfunctor}, and \cite{nilsen}.

Throughout, $G$ is a locally compact group and $A$ is a $C^*$-algebra.
We write $\ltwo$ for $L^2(G)$, $\lambda$ and $\rho$ for the left and
right regular \rep s of $G$ on $\ltwo$, respectively, $\K$ for the
compact operators $\K(\ltwo)$, and $M$ (sometimes) for the \rep\ of
\cog\ on $\ltwo$ by multiplication operators.  We write $w_G$ for the
unitary element of $M(\cog\xt\cstg)=C_b^\beta(G,M(\cstg))$ given by
the norm-bounded strictly continuous function
$w_G(s)=s$ for $s\in G$, where the $\beta$ signifies that we are using
the strict topology on $M(\cstg)$.

We use
$(A,\alpha)$ to denote an action $\alpha$ of $G$ on $A$.
A \emph{coaction} $(A,\delta)$ of $G$ on $A$ is an injective nondegenerate
homomorphism $\delta: A \to M(A \xt \cstg)$ satisfying:
\begin{enumerate}
\item $\clspn\delta(A)(1 \xt \cstg) = A \xt \cstg$
\item
  $(\delta \xt \id) \circ \delta = (\id \xt \delta_G) \circ \delta$,
\end{enumerate}
where $\xt$ always denotes the minimal \cst-tensor product, and where
$\delta_G: \cstg \to M(\cstg \xt \cstg)$ is the usual comultiplication
of $\cstg$, i.e., the integrated form of the strictly continuous
unitary homomorphism $s\mapsto s\xt s$.  
Here, and similarly throughout the paper (unlike the
convention used in~\cite{enchilada}),
$\delta(A)(1 \xt \cstg)$ represents the set of products
 $\{ \delta(a)(1\xt z) \mid a\in A, z\in C^*(G) \}$. 
(Also note that by definition, our coactions are \emph{nondegenerate}
in the sense of \cite[Definition~2.10]{enchilada}.)
The (full) crossed
product of an action $(A,\alpha)$ is $A\rtimes_\alpha G$, which
comes with a universal \covrep\ $(i_A,i_G)$ and a dual coaction
$\what\alpha$.  The reduced crossed product of $(A,\alpha)$ is
$A\rtimes_{\alpha,r} G$.  The crossed product of a coaction
$(A,\delta)$ is $A\rtimes_\delta G$, which comes with a universal
\covrep\ $(j_A,j_G)$ and a dual action $\what\delta$.

For any action $(A,\alpha)$, the \emph{canonical surjection}
$\Phi_\alpha\colon\axg\to A\xt\K$
is determined by
\begin{enumerate}
\item
$\Phi_\alpha\circ j_{A\rtimes_\alpha G}\circ i_A=(\id\xt M)\circ \alpha\inv$

\item
$\Phi_\alpha\circ j_{A\rtimes_\alpha G}\circ i_G=1\xt\lambda$

\item
$\Phi_\alpha\circ j_G=1\xt M$.
\end{enumerate}
In item~(a), ``$\alpha^{-1}$'' refers to the map that sends
$a\in A$ to the element of $M(A\otimes C_0(G))$ determined by
the function $s\mapsto \alpha_{s^{-1}}(a)$ (see~\cite{raeburn}).
For a coaction $(A,\delta)$,
the \emph{canonical surjection}
$\Phi_\delta:\adg\to A\xt\K$ is determined by
\begin{enumerate}
\item
$\Phi_\delta\circ i_{A\rtimes_\delta G}\circ j_A=(\id\xt \lambda)\circ \delta$

\item
$\Phi_\delta\circ i_{A\rtimes_\delta G}\circ j_G=1\xt M$

\item
$\Phi_\delta\circ i_G=1\xt \rho$.
\end{enumerate}

An \emph{equivariant homomorphism} $\phi:(A,\delta)\to (B,\epsilon)$ 
between coactions
is a possibly degenerate homomorphism $\phi$ mapping $A$ into $B$ (not $M(B)$)
such that the diagram
\[
\begin{tikzcd}
A \arrow[r,"\delta"] \arrow[d,"\phi"']
&\wilde M(A\xt\cstg) \arrow[d,"\phi\xt\id"]
\\
B \arrow[r,"\epsilon"']
&\wilde M(B\xt\cstg)
\end{tikzcd}
\]
commutes (see \cite[Definition~3.2]{klqfunctor}).
Immediately following \cite[Corollary~3.13]{klqfunctor} it is 
noted that 
``a routine adaptation of the usual arguments''
(i.e., carefully applying the definitions)
shows that every $\delta-\epsilon$ equivariant homomorphism $\phi:A\to B$
gives rise to a $\what\delta-\what\epsilon$ equivariant homomorphism
\[
\phi\rtimes G=(j_B\circ\phi)\times j_G:A\rtimes_\delta G\to B\rtimes_\epsilon G.
\]

A coaction $(A,\delta)$ is
\emph{maximal} if 
$\Phi_\delta$ is an
isomorphism, and is \emph{normal} if $j_A:A\to M(A\rtimes_\delta G)$
is injective.  Equivalently, $\delta$ is normal exactly when
  $\Phi_\delta$ factors through an isomorphism of
  $A\rtimes_\delta G\rtimes_{\what\delta,r} G$ onto $A\xt\K$.

Every dual coaction $(A\rtimes_\alpha G,\what\alpha)$ is maximal.
Every coaction $(A,\delta)$ has a \emph{normalization}
$(A^n,\delta^n)$, which is unique up to isomorphism, such that
$A^n$ is a
quotient of~$A$, the coaction $\delta^n$ is normal, the quotient map
$\psi_\delta:A\to A^n$ is $\delta-\delta^n$ equivariant, 
and the  homomorphism
$\psi_\delta\rtimes G:A\rtimes_\delta G\to A^n\rtimes_{\delta^n} G$ is an
isomorphism.  For a dual coaction $\what\alpha$
on an action crossed product $A\rtimes_\alpha G$, the
associated map $\psi_{\what\alpha}$ is the regular \rep\
$A\rtimes_\alpha G\to A\rtimes_{\alpha,r} G$.  Thus, the dual coaction
on the reduced crossed product is the normalization $\what\alpha^n$, and the double-dual
action on $A\rtimes_{\alpha,r} G\rtimes_{\what\alpha^n} G$ is
$\what{\what\alpha^n}$.

\medskip
\thmref{imaitakai} below is \emph{Imai--Takai duality}.  The original
version was stated in \cite{imaitakai} for reduced crossed products,
and did not use the word `coaction'.  Version~(a) below for full crossed
products appears in \cite[Theorem~7]{raeburn}.  We use
\cite{enchilada} as a convenient reference.

\begin{thm}[{\cite[Theorem~A.67]{enchilada}}]\label{imaitakai}
  For any action $(A,\alpha)$:
  \begin{enumerate}
  \item \(full-crossed-product version\) The canonical map
  $\Phi_\alpha$ is an
    $\what{\what\alpha}-(\alpha\xt\ad\rho)$ equivariant isomorphism
    of $A\rtimes_\alpha G\rtimes_{\what\alpha} G$ onto $A\xt\K$.

  \item \(reduced-crossed-product version\) There is an
    $\what{\what\alpha^n}-(\alpha\xt\ad\rho)$ equivariant isomorphism
    $\Phi_{\alpha,r}$ of 
    $A\rtimes_{\alpha,r} G\rtimes_{\what\alpha^n} G$ onto $A\xt\K$
    such that $\Phi_\alpha = \Phi_{\alpha,r}\circ (\psi_{\what\alpha}\rtimes G)$.
  \end{enumerate}
\end{thm}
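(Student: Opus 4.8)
The plan is to prove part~(a) directly from the universal properties of the crossed products, and then deduce part~(b) formally from the normalization machinery recalled above; throughout, write $D:=A\rtimes_\alpha G\rtimes_{\what\alpha}G$. First I would check that $\Phi_\alpha$ is a well-defined homomorphism: since $D$ is universal for covariant pairs $(\pi,\mu)$ consisting of a nondegenerate representation $\pi$ of $A\rtimes_\alpha G$ and a representation $\mu$ of $C_0(G)$ satisfying $\what\alpha$-covariance, it suffices to check that $\bigl(\,((\id\xt M)\circ\alpha\inv)\times(1\xt\lambda),\ 1\xt M\,\bigr)$ is such a pair. That $((\id\xt M)\circ\alpha\inv,\,1\xt\lambda)$ is $\alpha$-covariant is the identity $\lambda_s M(g)\lambda_s^*=M(\lt_s g)$ combined with $\alpha_{s^{-1}}\circ\alpha_t=\alpha_{s^{-1}t}$, and the $\what\alpha$-covariance unravels, via $\what\alpha(i_A(a))=i_A(a)\xt1$ and $\what\alpha(i_G(s))=i_G(s)\xt s$, into the relations that $\lambda$ and $M$ satisfy inside $\K$. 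This step is routine, if fussy, bookkeeping with slice maps.

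\textbf{Surjectivity.} The image of $\Phi_\alpha$ is a $C^*$-subalgebra of $A\xt\K$ whose multiplier algebra contains $1\xt M(C_0(G))$, $1\xt\lambda(C^*(G))$, and the ``diagonal'' copy $(\id\xt M)(\alpha\inv(A))$. Two density facts finish the job: $\clspn\{M(g)\lambda(z):g\in C_0(G),\ z\in C^*(G)\}=\K$, a Stone--von Neumann computation on integral kernels; and the functions $s\mapsto f(s)\alpha_{s^{-1}}(a)$ span a dense subspace of $C_0(G,A)=A\xt C_0(G)$ as $a\in A$ and $f\in C_c(G)$ vary, which follows from a partition-of-unity argument using that each $\alpha_{s^{-1}}$ is an automorphism. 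Combining these, $\clspn\bigl\{(\id\xt M)(\alpha\inv(a))\,(1\xt M(f))\,(1\xt\lambda(z))\bigr\}=A\xt\K$.

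\textbf{Injectivity --- the main obstacle.} I would produce the inverse $\Psi\colon A\xt\K\to D$. Using $\K\cong C_0(G)\rtimes_{\lt}G$, such a homomorphism amounts to a nondegenerate homomorphism $\pi_A\colon A\to M(D)$ together with a covariant representation $(\mu,V)$ of $(C_0(G),\lt)$ in $M(D)$, with $\pi_A(A)$ commuting with $\mu(C_0(G))$ and $V(G)$. For $(\mu,V)$ take $\mu=j_G$ and $V_s=j_{A\rtimes_\alpha G}(i_G(s))$: the covariance $V_s\,j_G(g)\,V_s^*=j_G(\lt_s g)$ is again precisely the $\what\alpha$-covariance of $(j_{A\rtimes_\alpha G},j_G)$. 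The delicate point is producing $\pi_A$; it is \emph{not} $j_{A\rtimes_\alpha G}\circ i_A$ --- the latter is carried by $\Phi_\alpha$ to the diagonal copy, not to $a\xt1$ --- but a twisted version, recovered by a partition-of-unity argument inside $D$ mirroring how $A\xt1\subseteq M(A\xt\K)$ is reconstructed from the diagonal copy, the $1\xt M(C_0(G))$, and the $1\xt\lambda_s$. With $\Psi:=\pi_A\xt(\mu\times V)$, one then checks $\Psi\circ\Phi_\alpha=\id$ and $\Phi_\alpha\circ\Psi=\id$ on generators. Equivariance of $\Phi_\alpha$ is a final generator check: $\what{\what\alpha}_s$ (the dual action of the dual coaction) fixes $j_{A\rtimes_\alpha G}(A\rtimes_\alpha G)$ and right-translates $j_G$, while $\alpha_s\xt\ad\rho_s$ fixes $1\xt\lambda(C^*(G))$ and the diagonal copy (via the one-line identity $\alpha_s\circ\alpha_{(ts)^{-1}}=\alpha_{t^{-1}}$) and right-translates $1\xt M(C_0(G))$. (All of this is, of course, classical Imai--Takai duality; the authors simply invoke \cite[Theorem~A.67]{enchilada}.)

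\textbf{Part~(b).} This is formal: the dual coaction on $A\rtimes_{\alpha,r}G$ is the normalization $\what\alpha^n$, and $\psi_{\what\alpha}\rtimes G\colon A\rtimes_\alpha G\rtimes_{\what\alpha}G\to A\rtimes_{\alpha,r}G\rtimes_{\what\alpha^n}G$ is an isomorphism which is $\what{\what\alpha}$--$\what{\what\alpha^n}$ equivariant, being of the form $\psi\rtimes G$. Setting $\Phi_{\alpha,r}:=\Phi_\alpha\circ(\psi_{\what\alpha}\rtimes G)\inv$ then yields an isomorphism of $A\rtimes_{\alpha,r}G\rtimes_{\what\alpha^n}G$ onto $A\xt\K$ with $\Phi_\alpha=\Phi_{\alpha,r}\circ(\psi_{\what\alpha}\rtimes G)$, and its $\what{\what\alpha^n}$--$(\alpha\xt\ad\rho)$ equivariance is inherited from part~(a).
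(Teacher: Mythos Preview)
The paper does not prove this theorem at all: it is stated in the Preliminaries as a background result, with a citation to \cite[Theorem~A.67]{enchilada} (and to \cite{raeburn} and \cite{imaitakai} for the original versions). So there is no ``paper's own proof'' to compare your attempt against --- and you yourself already noted this in your parenthetical remark.

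Your outline is a faithful sketch of the standard argument. The one place that remains genuinely incomplete is your construction of $\pi_A\colon A\to M(D)$ for the inverse: saying it is ``recovered by a partition-of-unity argument'' gestures at the right idea but does not pin down the multiplier or verify it is a $*$-homomorphism commuting with $\mu(C_0(G))$ and $V(G)$. In the literature this is typically handled either by writing down $\pi_A(a)$ explicitly as a strict limit (using that $j_G$ is nondegenerate and that functions of the form $s\mapsto f(s)\alpha_{s^{-1}}(b)$ are dense in $C_0(G,A)$), or by bypassing the inverse entirely and proving injectivity via a faithful-representation argument. Either way, this is exactly the ``main obstacle'' you flagged, and the references above fill it in. Your derivation of part~(b) from part~(a) via the normalization isomorphism $\psi_{\what\alpha}\rtimes G$ is correct and is precisely how the reduced version is obtained.
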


\thmref{katayama} below is \emph{Katayama duality}, which is the dual
version of \thmref{imaitakai}.  The original version was
stated in \cite[Theorem~8]{katayama} for reduced coactions, but we prefer
to work with full coactions.  

\begin{thm}[{\cite[Theorem~A.69]{enchilada}}]\label{katayama}
  For any coaction $(A,\delta)$:
  \begin{enumerate}
  \item \(maximal coaction version\)
    If $\delta$ is maximal,  the canonical surjection
    $\Phi_\delta$ is a $\what{\what{\delta}}-\dAK$ 
    equivariant isomorphism of $A\rtimes_\delta G\rtimes_{\what\delta} G$
    onto $A\xt\K$.

  \item \(normal coaction version\)
    If $\delta$ is normal, there is a
    ${\what{\what\delta}\,}^n-\dAK$ equivariant isomorphism
    $\Phi_{\delta,r}$ of 
    $A\rtimes_\delta G\rtimes_{\what\delta,r} G$ onto $A\xt\K$
    such that $\Phi_\delta = \Phi_{\delta,r}\circ \psi_{\what{\what{\delta}}}$.
  \end{enumerate}
\end{thm}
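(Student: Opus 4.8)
The plan is to verify first that the canonical surjection $\Phi_\delta$ of \secref{sec:prelim} really is a well-defined surjection onto $A\xt\K$, then to read the two isomorphism assertions straight off the definitions of maximality and normality; the one genuinely computational point is the equivariance.

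First I would exhibit $\Phi_\delta$ as a twofold integrated form. One checks that $\bigl((\id\xt\lambda)\circ\delta,\,1\xt M\bigr)$ is a \covrep\ of the coaction $(A,\delta)$ into $M(A\xt\K)$; using the coaction identity $(\delta\xt\id)\circ\delta=(\id\xt\delta_G)\circ\delta$, this reduces to the standard relations tying $\delta_G$, $\lambda$, $M$ and $w_G$ together on $\ltwo$. Its integrated form is a nondegenerate homomorphism $\Phi^0\colon A\rtimes_\delta G\to M(A\xt\K)$. Since $\what\delta$ fixes $j_A(A)$ and translates $j_G(\cog)$, while $1\xt\rho$ commutes with $1\xt\lambda$ and $\rho_sM(f)\rho_s^*$ is again a multiplication operator, the pair $(\Phi^0,\,1\xt\rho)$ is a \covrep\ of the dual action $(A\rtimes_\delta G,\what\delta,G)$, whose integrated form is $\Phi_\delta\colon\adg\to M(A\xt\K)$; one reads off directly that it satisfies (a)--(c). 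Surjectivity onto $A\xt\K$ is then quick: the image of $\Phi_\delta$ is the closed span of the products $(\id\xt\lambda)(\delta(a))\,(1\xt M(f))\,(1\xt\rho(z))$, and since $\clspn\{M(\cog)\,\rho(\cstg)\}=\K$ this image equals $\clspn(\id\xt\lambda)\bigl(\delta(A)\bigr)(1\xt\K)$, which is all of $A\xt\K$ by applying $\id\xt\lambda$ to the nondegeneracy condition $\clspn\delta(A)(1\xt\cstg)=A\xt\cstg$. Nothing here uses maximality or normality.

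The hard part will be showing that $\Phi_\delta$ is $\what{\what\delta}-\dAK$ equivariant, $\dAK$ being the coaction on $A\xt\K$ named in the statement. I would check this on the generators $i_{A\rtimes_\delta G}j_A(a)$, $i_{A\rtimes_\delta G}j_G(f)$ and $i_G(z)$ of $\adg$: compute $\what{\what\delta}$ on each from the formula for the dual coaction of an action, push the result through $\Phi_\delta\xt\id$, and compare with $\dAK$ applied to $(\id\xt\lambda)(\delta(a))$, $1\xt M(f)$ and $1\xt\rho(z)$ respectively. On each family this collapses to a short manipulation of $w_G$, $\lambda$, $\rho$ and $M$ on $\ltwo$; the only real care needed is to keep the several tensor copies of $\cstg$ in order and to do the bookkeeping in the multiplier algebras rather than in the algebras themselves. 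This argument uses nothing about $\delta$, so $\Phi_\delta$ is $\what{\what\delta}-\dAK$ equivariant for every coaction; combined with the definition that $\delta$ is maximal exactly when $\Phi_\delta$ is an isomorphism, it gives part~(a) immediately.

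For part~(b) I would use normality. As $\what{\what\delta}$ is a dual coaction it is maximal, so its normalization ${\what{\what\delta}\,}^n$ lives on $A\rtimes_\delta G\rtimes_{\what\delta,r}G$, and the associated map $\psi_{\what{\what\delta}}\colon\adg\to A\rtimes_\delta G\rtimes_{\what\delta,r}G$ is the regular \rep\ of the action $\what\delta$, i.e.\ the quotient map onto the reduced double crossed product. When $\delta$ is normal, the characterization recalled in \secref{sec:prelim} says precisely that $\Phi_\delta$ factors through an isomorphism of that reduced crossed product onto $A\xt\K$, so $\Phi_\delta=\Phi_{\delta,r}\circ\psi_{\what{\what\delta}}$ with $\Phi_{\delta,r}$ an isomorphism. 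It then remains only to push equivariance downward: $\psi_{\what{\what\delta}}$ is surjective and $\what{\what\delta}-{\what{\what\delta}\,}^n$ equivariant, and $\Phi_\delta$ is $\what{\what\delta}-\dAK$ equivariant by the previous step, so precomposing the desired identity $(\Phi_{\delta,r}\xt\id)\circ{\what{\what\delta}\,}^n=\dAK\circ\Phi_{\delta,r}$ with $\psi_{\what{\what\delta}}$ and cancelling the surjection (passing to multipliers where needed) reduces it to the equivariance of $\Phi_\delta$. Thus $\Phi_{\delta,r}$ is ${\what{\what\delta}\,}^n-\dAK$ equivariant, which is (b). The main obstacle throughout is the generator-by-generator equivariance check; the two isomorphism statements are essentially free once maximality and normality are read the way \secref{sec:prelim} arranges them.
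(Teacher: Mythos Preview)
Your argument is sound, but it is genuinely different from what the paper does. The paper does not prove this theorem at all: it is quoted as a preliminary result from \cite[Theorem~A.69]{enchilada}, and the only gap the authors fill in is the equivariance assertion in~(a), which they obtain indirectly by invoking Nilsen's equivariance result for normal coactions \cite[Remark~5]{nilsen} and then transferring it to the maximal setting via the categorical equivalence of maximal and normal coactions \cite[Theorem~3.3]{clda}. By contrast, you give a self-contained direct proof: you build $\Phi_\delta$ as an iterated integrated form, verify surjectivity from nondegeneracy of~$\delta$, and check $\what{\what\delta}-\dAK$ equivariance generator-by-generator for an arbitrary coaction before specializing to the maximal and normal cases. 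Your route avoids importing the maximal/normal equivalence machinery and makes the equivariance computation explicit; the paper's route is shorter on the page but leans on two external results whose proofs are themselves nontrivial.
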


The coaction $(A\otimes\K,\dAK)$  
associated to  $(A,\delta)$ in Theorem~\ref{katayama} is defined by
\[
\dAK= \ad \bigl(1\xt (M\xt\id)(w_G^*)\bigr)\circ 
(\id\xt\Sigma)\circ (\delta\xt\id),
\]
where $\Sigma:\cstg\xt\K\to \K\xt\cstg$ is the flip isomorphism
determined on elementary tensors by $a\xt b\mapsto b\xt a$.  
The statement about equivariance in part~(a)
is actually missing from \cite{enchilada};  it follows
from the analogous result for normal coactions in
\cite[Remark~5]{nilsen} and the 
equivalence of maximal and normal coactions
(\cite[Theorem~3.3]{clda}).
  
\medskip
Given an action $(A,\alpha)$,
we say that a (closed, two-sided) ideal $I$ of $A$ is \emph{$\alpha$-invariant} 
if $\alpha$ restricts to an action $\alpha|$ on $I$;
that is, if $\alpha_g(I)\subseteq I$ for each $g\in G$.
(See \cite[Section~3.4]{danacrossed} for further discussion.)
We write $\I_\alpha(A)$ for the set of 
$\alpha$-invariant ideals of $A$.

For a coaction $(A,\delta)$, an ideal $I$ of $A$ is 
\emph{$\delta$-invariant} 
if $\delta$ restricts to a coaction $\delta|$ on $I$.
By results in Section~2 of~\cite{nilsenbundle},
this is equivalent to the condition that
\begin{equation}\label{co-invt}
\clspn\delta(I)(1\otimes C^*(G)) = I\otimes C^*(G).
\end{equation}
(See also the discussion preceding Definition~3.17
in~\cite{klqfunctor}.)
We write $\I_\delta(A)$ for the set of 
$\delta$-invariant ideals of $A$.

Somewhat surprisingly (to us), the fact that crossed products of
invariant ideals are \emph{invariant} ideals has not been clearly stated
or entirely justified elsewhere in the literature.

\begin{prop}\label{invariant}  
\begin{enumerate}
\item 
For any action~$(A,\alpha)$, for each $I\in\I_\alpha(A)$
the inclusion map
$\phi:I\hookrightarrow A$
is $\alpha|-\alpha$ equivariant, and
$\phi\rtimes G$ is an isomorphism of $I\rtimes_{\alpha|} G$ 
onto an $\what\alpha$-invariant ideal $I\rtimes_\alpha G$
of $A\rtimes_\alpha G$.
Moreover, the image $I\rtimes_{\alpha,r}G$ of $I\rtimes_\alpha G$
under the regular representation of~$A\rtimes_{\alpha}G$ is an 
$\what\alpha^n$-invariant
ideal of the reduced crossed product $A\rtimes_{\alpha,r}G$. 

\item
For any coaction $(A,\delta)$, for each $I\in\I_\delta(A)$
the inclusion map
$\phi:I\hookrightarrow A$
is $\delta|-\delta$ equivariant, and
$\phi\rtimes G$ is an isomorphism of $I\rtimes_{\delta|} G$ 
onto a $\what\delta$-invariant
ideal $I\rtimes_\delta G$
of $A\rtimes_\delta G$.
\end{enumerate}
\end{prop}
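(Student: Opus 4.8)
By definition, $\alpha|$ (resp.\ $\delta|$) is just the restriction of $\alpha$ (resp.\ $\delta$) to $I$, so in both parts the inclusion $\phi$ is equivariant by inspection; the real content is that $\phi\rtimes G$ is injective, that its image is an ideal, and that this image is invariant for the dual coaction in part~(a), or for the dual action in part~(b).  For the last point I will use the following principle repeatedly: \emph{if $\theta\colon C\to B$ is an injective equivariant homomorphism between \cst-algebras each carrying an action (resp.\ a coaction) of $G$, and $\theta(C)$ is an ideal of $B$, then $\theta(C)$ is invariant for the action (resp.\ coaction) of $B$}, since $\theta$ carries the structure on $C$ onto the restriction of the structure on $B$.

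\emph{Part~(a).}  First, $\alpha_s(I)=I$ for every $s\in G$ (apply invariance to $s$ and to $s\inv$), so $\cc(G,I)$ is a two-sided ideal of $\cc(G,A)$ under twisted convolution, and its closure $I\rtimes_\alpha G$ in $A\rtimes_\alpha G$ is an ideal.  To see that $\phi\rtimes G=(i_A\circ\phi)\times i_G$ is an isometry of $I\rtimes_{\alpha|}G$ onto this closure, I compare the universal norms on $\cc(G,I)$.  Every nondegenerate covariant representation $(\pi,U)$ of $(I,\alpha|)$ extends to one of $(A,\alpha)$: extend $\pi$ to $M(I)$ and restrict to $A\subseteq M(I)$, nondegeneracy persisting since $\clspn\pi(A)\pi(I)\H=\clspn\pi(AI)\H=\pi(I)\H$.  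Conversely, any covariant representation $(\sigma,V)$ of $(A,\alpha)$ restricts to a covariant representation of $(I,\alpha|)$ on the subspace $\H_0:=\clspn\sigma(I)\H$, which is $V$-invariant since $V_s\sigma(I)\H=\sigma(\alpha_s(I))V_s\H=\sigma(I)\H$, and $\sigma\times V$ annihilates $\H_0^\perp$ on $\cc(G,I)$.  This yields the norm equality, hence the isomorphism onto $I\rtimes_\alpha G$.  Since $\phi\rtimes G$ is $\what{\alpha|}-\what\alpha$ equivariant --- the standard passage of equivariant homomorphisms to crossed products --- and $\what{\alpha|}$ is a coaction, the principle above shows $I\rtimes_\alpha G$ is $\what\alpha$-invariant.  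For the reduced statement, let $\Lambda=\psi_{\what\alpha}\colon A\rtimes_\alpha G\to A\rtimes_{\alpha,r}G$ be the regular representation; it is surjective, so $I\rtimes_{\alpha,r}G:=\Lambda(I\rtimes_\alpha G)$ is an ideal, and since $\Lambda$ is $\what\alpha-\what\alpha^n$ equivariant, applying $\Lambda\xt\id$ to $\clspn\what\alpha(I\rtimes_\alpha G)(1\xt\cstg)=(I\rtimes_\alpha G)\xt\cstg$ gives the corresponding instance of~\eqref{co-invt} for $I\rtimes_{\alpha,r}G$ and $\what\alpha^n$.

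\emph{Part~(b).}  The argument runs parallel, but the coaction crossed product requires more care.  By the remark following \cite[Corollary~3.13]{klqfunctor}, $\phi\rtimes G=(j_A\circ\phi)\times j_G$ is $\what{\delta|}-\what\delta$ equivariant; since $\what{\delta|}$ is an action, once $\phi\rtimes G$ is known to be an isomorphism onto an ideal, $\what\delta$-invariance follows from the principle above.  For injectivity I again compare universal norms.  A nondegenerate covariant representation $(\pi,\mu)$ of $(I,\delta|)$ extends to one of $(A,\delta)$ by extending $\pi$ to $M(I)\supseteq A$ as before: the coaction covariance relation survives on all of $A$ because, for $a\in A$ and $b\in I$, the identity $\clspn\delta(I)(1\xt\cstg)=I\xt\cstg$ forces the would-be discrepancy multiplier to annihilate a dense subspace.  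Conversely, a covariant representation $(\sigma,\nu)$ of $(A,\delta)$ restricts, on $\H_0:=\clspn\sigma(I)\H$, to a covariant representation of $(I,\delta|)$; the crucial point is that $\H_0$ is $\nu$-invariant, which one extracts from the covariance relation together with the $\delta$-invariance of $I$.  This gives the norm equality, hence the isomorphism of $I\rtimes_{\delta|}G$ onto $J:=\clspn j_A(I)j_G(\cog)$.  That $J$ is an ideal of $A\rtimes_\delta G=\clspn j_A(A)j_G(\cog)$ reduces to the inclusion $j_G(\cog)j_A(I)\subseteq J$, which follows by rewriting $j_G(f)j_A(b)$ via the covariance relation for $(j_A,j_G)$ and~\eqref{co-invt} --- the rewriting keeps the $j_A$-leg inside $I$; then $J=J^*$ and $(A\rtimes_\delta G)J\subseteq J$ make $J$ an ideal, and $\what\delta$-invariance follows from the principle above.

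\emph{Where the difficulty lies.}  The only genuinely delicate steps are in part~(b): verifying that $\H_0$ is $\nu$-invariant and that $j_G(\cog)j_A(I)\subseteq J$ requires tracking how the coaction covariance relation (with its built-in $w_G$) interacts with the ideal $I$ and with~\eqref{co-invt}.  Alternatively one could first treat normal coactions, where $A\rtimes_\delta G$ has a concrete spatial realization inside $M(A\xt\K)$, and then reduce a general coaction to its normalization; but this does not obviously shorten the proof.
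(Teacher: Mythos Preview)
Your proof is correct and follows essentially the same strategy as the paper's: equivariance of $\phi$ is immediate; injectivity of $\phi\rtimes G$ comes from extending/restricting covariant representations; the ideal property is checked by a span computation; and invariance of the image is inherited via equivariance of $\phi\rtimes G$.  There are, however, two places where the paper's route is a bit tighter than yours and worth noting.

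First, for $\what\alpha$-invariance in part~(a) the paper does not invoke an abstract ``principle'' but instead computes directly, using the explicit formula for $\what\alpha$, that
\[
\clspn\what\alpha(I\rtimes_\alpha G)(1\xt\cstg)
=\clspn\bigl(i_A(I)\xt 1\bigr)(i_G\xt\id)\bigl(\delta_G(\cstg)(1\xt\cstg)\bigr)
=(I\rtimes_\alpha G)\xt\cstg.
\]
Your principle is valid, but making it precise for coactions requires tracking $\theta\xt\id$ through the relative multiplier algebras $\wilde M(-\xt\cstg)$, which is exactly the bookkeeping you are trying to avoid.

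Second, for the ideal property in part~(b) you aim to show $j_G(\cog)j_A(I)\subseteq J$ by ``rewriting $j_G(f)j_A(b)$ via the covariance relation.''  The paper instead invokes the known identity
$\clspn j_A(A)j_G(\cog)=\clspn j_G(\cog)j_A(A)$
(\cite[Lemma~3.8]{klqfunctor}) once, for~$A$, and then computes
\[
(I\rtimes_\delta G)(A\rtimes_\delta G)
=\clspn j_A(I)\,j_G(\cog)j_A(A)
=\clspn j_A(I)\,j_A(A)j_G(\cog)
=\clspn j_A(I)j_G(\cog)
=I\rtimes_\delta G,
\]
using only $IA=I$.  This sidesteps any need to redo the $j_G$--$j_A$ commutation inside~$I$ or to unwind the $w_G$-covariance relation by hand.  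For $\what\delta$-invariance the paper's argument is exactly your principle in the action case, written out:
$\what\delta_s\bigl(\phi\rtimes G(I\rtimes_{\delta|}G)\bigr)
=\phi\rtimes G\bigl(\what{\delta|}_s(I\rtimes_{\delta|}G)\bigr)
=\phi\rtimes G(I\rtimes_{\delta|}G)$.
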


\begin{proof}
(a) Except for invariance, this  is a consequence of \cite[Proposition~3.19]{danacrossed}.
For the $\what\alpha$-invariance of $I\rtimes_\alpha G$,
we have
\[
I\rtimes_\alpha G=\phi\rtimes G(I\rtimes_{\alpha|} G)
= \clspn i_A(\phi(I))i_G(C^*(G)),
\]
so that
\begin{align*}
\what\alpha(I\rtimes_\alpha G)
&=\clspn
\what\alpha\bigl(i_A(\phi(I))\bigr)\what\alpha\bigl(i_G(\cstg)\bigr)
\\&=\clspn\bigl(i_A(\phi(I))\xt 1\bigr)(i_G\xt\id)(\delta_G(\cstg))
\end{align*}
by definition of $\what{\alpha}$.  Thus,
\begin{align*}
\clspn&\what\alpha(I\rtimes_\alpha G)(1\xt\cstg)\\
&=\clspn\bigl(i_A(\phi(I))\xt 1\bigr)(i_G\xt\id)(\delta_G(\cstg))
(1\xt\cstg)\\
&=\clspn\bigl(i_A(\phi(I))\xt 1\bigr)
(i_G\xt\id)\bigl(\delta_G(\cstg)(1\xt\cstg)\bigr)
\\&=\clspn\bigl(i_A(\phi(I))\xt 1\bigr)
(i_G\xt\id)\bigl(\cstg\xt\cstg\bigr)
\\&=\clspn\bigl(i_A(\phi(I))\xt 1\bigr)
\bigl(i_G(\cstg)\xt\cstg\bigr)
\\&=\clspn\bigl(i_A(\phi(I))i_G(\cstg)\bigr)\xt\cstg
\\&=\bigl(\phi\rtimes G(I\rtimes_{\alpha|} G)\bigr)\xt\cstg
\\&=(I\rtimes_\alpha G)\xt\cstg,
\end{align*}
which shows that $I\rtimes_\alpha G$ is $\what\alpha$-invariant.
Invariance of $I\rtimes_{\alpha,r}G$ now follows from 
$\what\alpha-\what\alpha^n$ equivariance of the regular representation.

\medskip\noindent(b) 
Some of this --- apart from invariance of $I\rtimes_\delta G$ ---
is addressed in \cite[Proposition~2.1]{nilsenbundle},
but with a different convention regarding equivariant maps. 

For us,  
the restriction $\delta|$ is a coaction on $I$ by definition of invariance,
and the inclusion $\phi$ is trivially $\delta|-\delta$ equivariant.
The verification that the induced $\what{\delta|}-\what\delta$ equivariant homomorphism 
$\phi\rtimes G:I\rtimes_{\delta|} G\to A\rtimes_\delta G$
is injective follows a standard computation with covariant representations on Hilbert space
(see \cite[proof of Proposition~2.1]{nilsenbundle}, for example).

To see that $I\rtimes_\delta G = \phi\rtimes G(I\rtimes_{\delta|} G)$ 
is an ideal of $A\rtimes_\delta G$,
first note that \cite[Lemma~3.8]{klqfunctor} implies
\[
A\rtimes_\delta G
=\clspn j_A(A)j_G(\cog)
=\clspn j_G(\cog)j_A(A),
\]
so that
\begin{align*}
(I\rtimes_\delta G)(A\rtimes_\delta G)
&=\clspn j_A(I)j_G(\cog)j_A(A)
\\&=\clspn j_A(I)j_A(A)j_G(\cog)
\\&=\clspn j_A(I)j_G(\cog)
\\&=I\rtimes_\delta G,
\end{align*}
and similarly for $(A\rtimes_\delta G)(I\rtimes_\delta G)$.

Finally, to see that $I\rtimes_\delta G$ is $\what\delta$-invariant, 
for any $s\in G$ we compute:
\begin{align*}
\what\delta_s\left(I\rtimes_{\delta} G\right)
&= \what\delta_s\left(\phi\rtimes G(I\rtimes_{\delta|} G)\right)\\
&=\phi\rtimes G\bigl(\what{\delta|}_s(I\rtimes_{\delta|} G)\bigr)\\
&=\phi\rtimes G(I\rtimes_{\delta|} G)\\
&=I\rtimes_\delta G.\qedhere
\end{align*}
\end{proof}

\medskip
For our main result, \thmref{ladder thm}, we refer to the
\emph{Rieffel correspondence} (see, for example,
\cite[Proposition~3.24]{tfb}). 
This is the lattice isomorphism between the ideals of $A$ and the ideals of
$B$ that arises from a $B-A$ \ibm~$X$ by associating each ideal $I$ of $A$ 
with the  ideal $J$~of $B$  given by
\[
 J= \clspn\lip B<X\cdot I,X>.
\]
If $A$ and $B$ are equipped with actions or coactions of~$G$ and $X$
has a suitably compatible action or coaction  
(see \cite[Definitions~2.5 and~2.10]{enchilada}), 
then the Rieffel correspondence preserves invariance of ideals, and therefore
restricts to a bijection between the 
appropriately-invariant
ideals of~$A$ and those of~$B$. 
Again, since we could not find a precise statement
of this fact in the literature, we provide one here.

\begin{lem}\label{rieffel-invt}
Let $X$ be a $B-A$ \ibm, let $I$ be an ideal of~$A$,
and let $J$ be the ideal of~$B$ associated to $I$
under the Rieffel correspondence. 
\begin{enumerate}
\item If $(A,\alpha)$ and $(B,\beta)$ are actions
of~$G$ and there exists a $\beta-\alpha$ compatible
action $\gamma$ on~$X$, then $I$ is $\alpha$-invariant
if and only if $J$ is $\beta$-invariant.

\item If $(A,\delta)$ and $(B,\epsilon)$ are coactions
of~$G$ and there exists a nondegenerate
$\epsilon-\delta$ compatible
coaction $\zeta$ on~$X$, then $I$ is $\delta$-invariant
if and only if $J$ is $\epsilon$-invariant.
\end{enumerate}
\end{lem}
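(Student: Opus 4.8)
The plan is to work with the sub-bimodule $Y:=\clspn X\cdot I$ of $X$ corresponding to $I$ under the Rieffel correspondence; recall that then $Y=\clspn J\cdot X$ and $J=\clspn\lip B<Y,X>$. In each part it suffices to prove the forward implication ``$I$ invariant $\Rightarrow$ $J$ invariant'', since the reverse implication is the forward implication applied to the dual $A-B$ \ibm~$\wilde X$, which carries the compatible action (resp.\ coaction) obtained by reversing $\gamma$ (resp.\ $\zeta$) and for which the ideal of $A$ associated to $J$ is again $I$. Part~(a) is then immediate: assuming $\alpha_s(I)=I$ for all $s$, the automorphism $\gamma_s$ of $X$ satisfies $\gamma_s(x\cdot a)=\gamma_s(x)\cdot\alpha_s(a)$, so $\gamma_s(Y)=\clspn\gamma_s(X)\cdot\alpha_s(I)=Y$, and since $\gamma_s$ also intertwines $\beta_s$ with the left inner product we get $\beta_s(J)=\clspn\lip B<\gamma_s(Y),\gamma_s(X)>=\clspn\lip B<Y,X>=J$. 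The one thing to check is the equality $\gamma_s(Y)=Y$, which is precisely where $\alpha$-invariance of $I$ enters.

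For part~(b), assume $I$ is $\delta$-invariant, i.e.\ $\clspn\delta(I)(1\xt\cstg)=I\xt\cstg$ by~\eqref{co-invt}; the goal is $\clspn\epsilon(J)(1\xt\cstg)=J\xt\cstg$. The crux is the identity
\[
\clspn\zeta(Y)(1\xt\cstg)=Y\xt\cstg
\]
in $\wilde M(X\xt\cstg)$, which follows by chaining $\zeta(x\cdot a)=\zeta(x)\delta(a)$, the nondegeneracy $\clspn\zeta(X)(1\xt\cstg)=X\xt\cstg$ of $\zeta$, and the hypothesis on $I$. Granting it, I would combine this identity with nondegeneracy of $\zeta$ again, with $J\xt\cstg=\clspn\lip{B\xt\cstg}<Y\xt\cstg,X\xt\cstg>$, and with the inner-product compatibility $\epsilon(\lip B<y,x>)=\lip{B\xt\cstg}<\zeta(y),\zeta(x)>$ to rewrite
\[
J\xt\cstg=\clspn\lip{B\xt\cstg}<\zeta(Y)(1\xt\cstg),\zeta(X)(1\xt\cstg)>,
\]
and then move the multipliers $1\xt\cstg$ through the inner product until the right-hand side becomes $\clspn\epsilon(J)(1\xt\cstg)$.

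I expect this last manipulation to be the main obstacle. The subtlety is that the multiplier $1\xt\cstg$ in the criterion for $I$ acts on the $A$-side of $X\xt\cstg$ (as a right-module multiplier) while the one in the criterion for $J$ acts on the $B$-side (as a left-module multiplier), so matching the two expressions will use the \ibm\ relations $\lip B<\xi,\eta>\cdot b=\lip B<\xi,b^*\cdot\eta>$ and $\lip B<\xi\cdot a,\eta>=\lip B<\xi,\eta\cdot a^*>$ together with the symmetric nondegeneracy $\clspn(1\xt\cstg)\zeta(X)=X\xt\cstg$ (which holds since $\clspn(1\xt\cstg)\epsilon(B)=B\xt\cstg$ by taking adjoints in the nondegeneracy of $\epsilon$). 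A cleaner route that bypasses this bookkeeping and handles both parts at once is to pass to the linking algebra $L=L(X)$: the compatible action (resp.\ coaction) on $X$ corresponds to one on $L$ fixing the complementary full corner projections $p,q\in M(L)$ with $pLp=B$, $qLq=A$, $pLq\cong X$; the Rieffel correspondence is realized by the ideal $K:=\clspn LIL=\clspn LJL$ of $L$, with $qKq=I$ and $pKp=J$; and one then checks --- using only that the action/coaction on $L$ is nondegenerate and fixes $p$ and $q$ --- that $K$ is invariant iff $I$ is iff $J$ is.
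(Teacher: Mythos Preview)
Your argument is correct and follows essentially the same route as the paper: reduce to one implication by symmetry, handle~(a) directly, and for~(b) chain the compatibility of $\zeta$ with the module action and the left inner product together with nondegeneracy. The one difference is that the paper sidesteps the side-switching you flag as the ``main obstacle'' by starting instead from the adjoint form $\clspn(1\xt\cstg)\epsilon(J)=J\xt\cstg$ of the invariance criterion; then the factor $(1\xt\cstg)$ slides directly into the \emph{first} slot of the left inner product via $b\cdot\lip B<\xi,\eta>=\lip B<b\cdot\xi,\eta>$, and the chain runs from $\clspn\bigl\langle(1\xt\cstg)\zeta(X)\delta(I),\,(1\xt\cstg)\zeta(X)\bigr\rangle$ straight to $J\xt\cstg$ without ever needing to move a multiplier from the right of $\zeta(X)$ to the left. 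Your linking-algebra alternative is a genuinely different route the paper does not take; it is valid and has the merit of treating~(a) and~(b) uniformly, at the cost of invoking the dictionary between equivariant bimodules and corner-preserving actions or coactions on $L(X)$.
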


\begin{proof}
Part~(a) is straightforward, so we only address part~(b).
Moreover, by symmetry it suffices to prove only one implication; 
so suppose $I$ is $\delta$-invariant. 
Then $\clspn(1\xt\cstg)\delta(I) = I\xt\cstg$
(after taking adjoints in~\eqref{co-invt}), and
nondegeneracy of $\zeta$ means that
$\clspn(1\xt\cstg)\zeta(X) = X\xt\cstg$.
So, writing $M$ for $M(B\otimes\cstg)$, we have:
\begin{align*}
\clspn(1\xt\cstg)\epsilon(J)
&=\clspn(1\xt\cstg)\epsilon\bigl({}_B\<X\cdot I,X\>\bigr)\\
&=\clspn(1\xt\cstg){}_B\<\zeta(X\cdot I),\zeta(X)\>\\
&=\clspn{}_M\<(1\xt\cstg)\zeta(X)\delta(I),(1\xt\cstg)\zeta(X)\>\\
&=\clspn{}_M\<(X\xt\cstg)\delta(I),X\xt\cstg,\>\\
&=\clspn{}_M\<(X\xt\cstg)(1\xt\cstg)\delta(I),X\xt\cstg\>\\
&=\clspn{}_M\<(X\xt\cstg)(I\xt\cstg),X\xt\cstg\>\\
&=\clspn{}_B\<X\cdot I,X\>\xt\cstg
\\&=J\xt\cstg.
\end{align*}
Thus $J$ is $\epsilon$-invariant.
\end{proof}

The isomorphisms from the duality theorems~\ref{imaitakai} and~\ref{katayama} allow 
us to make the $A\otimes\K-A$ imprimitivity bimodule $A\otimes L^2$
into an $A\rtimes G\rtimes G- A$ imprimitivity bimodule
(for each of the four types of crossed products),
and when we do this we
can identify the associated ideals explicitly:

\begin{prop}\label{associated ideal}
  \begin{enumerate}
  \item Let $(A,\alpha)$ be an action, and let $I$ be an
    $\alpha$-invariant ideal of~$A$.  Then the ideal of
    $A\rtimes_\alpha G\rtimes_{\what\alpha} G$ associated to $I$ by
    the Rieffel correspondence is $I\rtimes_\alpha G\rtimes_{\what{\alpha}} G$.  
    The ideal of $A\rtimes_{\alpha,r} G\rtimes_{\what\alpha^n} G$
    associated to $I$ by the Rieffel correspondence is
    $I\rtimes_{\alpha,r} G\rtimes_{\what\alpha^n} G$.

  \item Let $(A,\delta)$ be a coaction, and let $I$ be a
    $\delta$-invariant ideal of $A$.  If $\delta$ is maximal,
    then the ideal of
    $A\rtimes_\delta G\rtimes_{\what\delta} G$ associated to $I$ by
    the Rieffel correspondence is $I\rtimes_\delta G\rtimes_{\what\delta} G$.  
    If $\delta$ is normal,
    the ideal of $A\rtimes_\delta G\rtimes_{\what\delta,r} G$
    associated to $I$ by the Rieffel correspondence is
    $I\rtimes_\delta G\rtimes_{\what\delta,r} G$.
  \end{enumerate}
\end{prop}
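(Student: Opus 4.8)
The plan is to reduce all four assertions to a computation with the canonical isomorphisms of \thmref{imaitakai} and \thmref{katayama}, after first identifying the Rieffel correspondence of the underlying bimodule. The $A\xt\K-A$ imprimitivity bimodule here is $A\xt\ltwo$, with right action $(a\xt h)\cdot c=ac\xt h$ and left inner product $\lip{A\xt\K}<a\xt h,b\xt k>=ab^*\xt\theta_{h,k}$, where $\theta_{h,k}$ denotes a rank-one operator. Then $(A\xt\ltwo)\cdot I=I\xt\ltwo$ and $\clspn\lip{A\xt\K}<I\xt\ltwo,A\xt\ltwo>=I\xt\K$, so the Rieffel correspondence of $A\xt\ltwo$ carries $I$ to $I\xt\K$. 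When the left module structure is transported across one of the isomorphisms $\Phi_\alpha$, $\Phi_{\alpha,r}$, $\Phi_\delta$, $\Phi_{\delta,r}$ --- call it $\Phi\colon C\to A\xt\K$ --- the right $A$-module structure is untouched, so the Rieffel correspondence of the resulting $C-A$ bimodule carries $I$ to $\Phi\inv(I\xt\K)$. Hence in each case it suffices to show that $\Phi$ maps the relevant double crossed product of $I$ onto $I\xt\K$.

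Consider first the full action case. By \propref{invariant}\,(a), $I\rtimes_\alpha G=\clspn i_A(I)i_G(\cstg)$ is the $\what\alpha$-invariant ideal of $A\rtimes_\alpha G$; applying \propref{invariant} once more, now to the coaction $(A\rtimes_\alpha G,\what\alpha)$, and using the standard spanning of a coaction crossed product, we get
\[
I\rtimes_\alpha G\rtimes_{\what\alpha}G=\clspn j_{A\rtimes_\alpha G}(i_A(I))\,j_{A\rtimes_\alpha G}(i_G(\cstg))\,j_G(\cog).
\]
Feeding this through $\Phi_\alpha$ via the defining relations of the canonical surjection turns it into
\[
\clspn(\id\xt M)(\alpha\inv(I))\,(1\xt\lambda(\cstg))\,(1\xt M(\cog)).
\]
Since $\clspn\lambda(\cstg)M(\cog)=\K$, the last two factors together contribute $1\xt\K$; and $\alpha$-invariance of $I$ gives $\clspn\alpha\inv(I)(1\xt\cog)=I\xt\cog$, from which a short manipulation of closed spans of products (using also $\clspn M(\cog)\K=\K$) yields exactly $I\xt\K$. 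Since $\Phi_\alpha$ is an isomorphism by \thmref{imaitakai}\,(a), this proves the first assertion of~(a). The maximal coaction case is entirely parallel: two applications of \propref{invariant} give
\[
I\rtimes_\delta G\rtimes_{\what\delta}G=\clspn i_{A\rtimes_\delta G}(j_A(I))\,i_{A\rtimes_\delta G}(j_G(\cog))\,i_G(\cstg),
\]
and $\Phi_\delta$ sends this to $\clspn(\id\xt\lambda)(\delta(I))\,(1\xt M(\cog))\,(1\xt\rho(\cstg))$, which equals $I\xt\K$ because $\clspn M(\cog)\rho(\cstg)=\K$ and $\delta$-invariance of $I$ is precisely~\eqref{co-invt}. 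With $\delta$ maximal, $\Phi_\delta$ is an isomorphism by \thmref{katayama}\,(a), settling~(b) in this case. Note that this computation of $\Phi_\delta(I\rtimes_\delta G\rtimes_{\what\delta}G)$ uses only the defining relations of $\Phi_\delta$, so it is valid for every coaction; maximality enters solely to invert $\Phi_\delta$.

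The two remaining cases reduce to what we have via the factorizations $\Phi_\alpha=\Phi_{\alpha,r}\circ(\psi_{\what\alpha}\rtimes G)$ and $\Phi_\delta=\Phi_{\delta,r}\circ\psi_{\what{\what\delta}}$. By the ``moreover'' clause of \propref{invariant}\,(a), the regular representation $\psi_{\what\alpha}$ carries $I\rtimes_\alpha G$ onto $I\rtimes_{\alpha,r}G$; reading this off on the generators $j_{A\rtimes_\alpha G}(I\rtimes_\alpha G)$ and $j_G(\cog)$ shows that $\psi_{\what\alpha}\rtimes G$ maps $I\rtimes_\alpha G\rtimes_{\what\alpha}G$ onto $I\rtimes_{\alpha,r}G\rtimes_{\what\alpha^n}G$ (an ideal, by a further application of \propref{invariant}). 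Combined with the previous paragraph and the factorization, $\Phi_{\alpha,r}(I\rtimes_{\alpha,r}G\rtimes_{\what\alpha^n}G)=I\xt\K$, and $\Phi_{\alpha,r}$ is an isomorphism by \thmref{imaitakai}\,(b); this completes~(a). Likewise, $\psi_{\what{\what\delta}}$ is the regular representation of the action crossed product $A\rtimes_\delta G\rtimes_{\what\delta}G$, which by \propref{invariant}\,(a) carries $I\rtimes_\delta G\rtimes_{\what\delta}G$ onto $I\rtimes_\delta G\rtimes_{\what\delta,r}G$; hence $\Phi_{\delta,r}(I\rtimes_\delta G\rtimes_{\what\delta,r}G)=I\xt\K$, and $\Phi_{\delta,r}$ is an isomorphism by \thmref{katayama}\,(b).

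The only genuinely load-bearing external input is the identity $\clspn\lambda(\cstg)M(\cog)=\K(\ltwo)$ --- equivalently, the isomorphism $\cog\rtimes_{\lt}G\cong\K$ --- together with its right-translation counterpart $\clspn M(\cog)\rho(\cstg)=\K$. Everything else is bookkeeping, and the one place needing care is the passage from $\clspn(\id\xt M)(\alpha\inv(I))(1\xt\K)$ (respectively $\clspn(\id\xt\lambda)(\delta(I))(1\xt\K)$) to $I\xt\K$: one checks both inclusions, and the less obvious one --- that these products do not escape $I\xt\K$ --- is exactly where invariance of $I$ is used, through $\alpha\inv$ in the action case and through~\eqref{co-invt} in the coaction case. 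Keeping the four cases and the two covariant pairs $(i,j)$ straight is the main source of friction, rather than any real difficulty.
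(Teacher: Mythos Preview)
Your proof is correct and shares the paper's overall plan: both first identify the Rieffel correspondence on $A\xt\ltwo$ as $I\mapsto I\xt\K$, and then check that the relevant $\Phi$ carries the double crossed product of~$I$ onto $I\xt\K$. The only difference is in how that image is computed. You work directly with generators and the defining relations of $\Phi$, invoking $\clspn\lambda(\cstg)M(\cog)=\K$ (and its $\rho$-analogue) together with the invariance of~$I$; the paper instead observes that $\Phi_\alpha$ restricted to $I\rtimes_\alpha G\rtimes_{\what\alpha}G$ is exactly the canonical surjection $\Phi_{\alpha|}$ of the restricted system $(I,\alpha|)$, which is already known to be onto $I\xt\K$. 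The paper's route is shorter and more conceptual---the Stone--von~Neumann identity you appeal to is absorbed into the duality theorem for the subsystem---while your argument makes the mechanism fully explicit and handles the reduced and normal cases by the factorizations rather than by reapplying duality to~$(I,\alpha|)$ or $(I,\delta|)$.
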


\begin{proof}
(a) The key observation is that image of
$I\rtimes_\alpha G\rtimes_{\what{\alpha}} G$
in $A\otimes \K$ under 
the canonical map $\Phi_\alpha$ 
coincides with the image of 
$I\rtimes_{\alpha|}G\rtimes_{\what{\alpha|}}G$ 
under the canonical map $\Phi_{\alpha|}$,
and this latter image is precisely $I\otimes\K$
by Theorem~\ref{imaitakai}\,(a). 
Since the ideal of $A\otimes\K$ associated to $I$
by the Rieffel correspondence is also precisely $I\otimes\K$,
it follows that the ideal of 
$A\rtimes_\alpha G\rtimes_{\what\alpha} G$ associated to $I$
is
\[
\Phi_\alpha^{-1}(I\otimes\K) 
= I\rtimes_\alpha G\rtimes_{\what{\alpha}} G.
\]

The other part of (a), and both parts of (b), are quite similar.
\end{proof}

\section{The ladder technique}\label{sec:ladder}

Our main result, \thmref{ladder thm} below, rests upon the following
basic observation concerning maps between sets. We record it only
for convenient reference.
\begin{lem}\label{trivial}
  If
  \[
    \begin{tikzcd}
      &W
      \\
      Z\arrow[ur,"h"]
      \\
      &Y \arrow[ul,"g"'] \arrow[uu,"v"']
      \\
      X \arrow[ur,"f"] \arrow[uu,"u"]
    \end{tikzcd}
  \]
  is a commutative diagram of sets and maps such that $u$ and $v$ are
  bijections, then $f$, $g$, and $h$ are also bijections.
\end{lem}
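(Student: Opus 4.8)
The plan is to run a short diagram chase using only two elementary facts about maps of sets: if a composite $\psi\circ\phi$ is injective then $\phi$ is injective, and if $\psi\circ\phi$ is surjective then $\psi$ is surjective. The commutativity of the diagram records exactly two equations, $u=g\circ f$ and $v=h\circ g$, and each of $u$ and $v$ is assumed bijective; the whole argument is just bookkeeping about which half of each hypothesis feeds which conclusion.

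First I would extract what bijectivity of $u=g\circ f$ gives: since $u$ is injective, $f$ is injective; since $u$ is surjective, $g$ is surjective. Then I would extract what bijectivity of $v=h\circ g$ gives: since $v$ is surjective, $h$ is surjective; since $v$ is injective, $g$ is injective. At this point $g$ is both injective and surjective, hence a bijection.

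With $g$ invertible, the remaining two maps are handled purely by composition: $f=g^{-1}\circ u$ and $h=v\circ g^{-1}$ are each composites of bijections, hence are themselves bijections. This completes the proof. There is no real obstacle here; the only thing to be careful about is not to conflate the two triangles, i.e.\ to remember that injectivity of $g$ comes from the upper relation $v=h\circ g$ while surjectivity of $g$ comes from the lower relation $u=g\circ f$.
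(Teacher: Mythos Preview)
Your argument is correct. The paper does not actually supply a proof of this lemma; it is stated as a ``basic observation concerning maps between sets'' and recorded ``only for convenient reference,'' so your diagram chase is exactly the kind of routine verification the authors are implicitly leaving to the reader.
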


Recall that for any $C^*$-algebra $A$, the set $\I(A)$ of all 
(closed, two-sided) ideals of $A$ is a lattice when ordered by inclusion,
with $I\wedge J = I\cap J$ and $I\vee J = I+J$.
For any fixed action $\alpha$ or coaction $\delta$ of $G$ on $A$, the
intersection and sum of two invariant ideals are themselves
invariant, so the sets $\I_\alpha(A)$ and $\I_\delta(A)$
of appropriately invariant ideals of~$A$
each form a sublattice of $\I(A)$.

\begin{thm}\label{ladder thm}
  \begin{enumerate}
  \item For any action $(A,\alpha)$, the assignments $I\mapsto I\rtimes_\alpha G$
    and  \hbox{$I\mapsto I\rtimes_{\alpha,r} G$} define lattice isomorphisms of
$\I_\alpha(A)$ onto $\I_{\what\alpha}(A\rtimes_\alpha G)$ and
    $\I_{\what\alpha^n}(A\rtimes_{\alpha,r} G)$, respectively.

  \item For any maximal or normal coaction $(A,\delta)$, 
  the assignment $I\mapsto I\rtimes_\delta G$
defines a lattice isomorphism of $\I_\delta(A)$ onto 
    $\I_{\what\delta}(A\rtimes_\delta G)$.
  \end{enumerate}
\end{thm}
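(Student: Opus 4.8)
The proof will run the \emph{ladder technique}: in each of the four cases we fit the desired map into a commutative diagram of the shape in \lemref{trivial}, whose four vertices $X,Y,Z,W$ are the lattices of appropriately invariant ideals of $A$, of $A\rtimes G$, of $A\rtimes G\rtimes G$, and of $A\rtimes G\rtimes G\rtimes G$ (each crossed product full or reduced as dictated by the relevant duality theorem, and the decorating action or coaction at each stage the corresponding dual or double-dual), whose short arrows $f,g,h$ are the crossed-product maps $I\mapsto I\rtimes G$, and whose long arrows are $u=g\circ f$ and $v=h\circ g$. Take part~(a) with full crossed products as the model: $f\colon\I_\alpha(A)\to\I_{\what\alpha}(A\rtimes_\alpha G)$ is $I\mapsto I\rtimes_\alpha G$; $g$ then crosses by the dual coaction $\what\alpha$; and $h$ crosses once more, by the double-dual action $\what{\what\alpha}$. \propref{invariant} --- its coaction part for $g$, its action part for $f$ and $h$ --- shows that $f,g,h$ carry appropriately invariant ideals to appropriately invariant ideals, so the diagram makes sense, and $f,g,h$ are visibly order-preserving.

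The substance is identifying the long arrows with Rieffel correspondences. Using \thmref{imaitakai} and \thmref{katayama} to make $A\otimes L^2$ into an $A\rtimes G\rtimes G-A$ imprimitivity bimodule, and, one rung up, $(A\rtimes G)\otimes L^2$ into an $(A\rtimes G\rtimes G\rtimes G)-(A\rtimes G)$ imprimitivity bimodule, \propref{associated ideal} identifies $u(I)=I\rtimes G\rtimes G$ as the ideal associated to $I$ by the first Rieffel correspondence, and --- applied with $A\rtimes G$ in place of $A$, where one uses that every dual coaction is maximal --- identifies $v(J)=J\rtimes G\rtimes G$ as the ideal associated to $J$ by the second. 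Since these imprimitivity bimodules carry the standard compatible action (resp.\ nondegenerate compatible coaction) transported through the duality isomorphism, \lemref{rieffel-invt} tells us both Rieffel correspondences restrict to bijections of the relevant invariant sublattices. Hence $u$ and $v$ are bijections, and \lemref{trivial} gives at once that $f$, $g$, and $h$ are bijections.

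Finally, $f$ is a lattice isomorphism: it is an order-preserving bijection whose inverse is order-preserving too, since $f^{-1}=u^{-1}\circ g$ is the composite of the order-preserving $g$ with the inverse of the order-isomorphism $u$ --- and an order-isomorphism between lattices automatically preserves $\wedge$ and $\vee$. The other three cases run along exactly these lines: for the reduced version of part~(a) one takes the outer crossed products to be reduced and replaces the dual coaction $\what\alpha$ by its normalization, invoking the reduced halves of \propref{invariant}, \thmref{imaitakai}, and \propref{associated ideal}; for part~(b) one interchanges the roles of actions and coactions at each rung, using \thmref{katayama} and \propref{associated ideal}(b) for the long arrow $u$ that runs through $\rtimes_\delta G$ (in its normal version the crossed product by $\what\delta$ there is the reduced one) and \propref{associated ideal}(a) for the long arrow $v$ that runs through the dual action $\what\delta$.

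I expect the one genuine obstacle to be bookkeeping: pinning down, rung by rung, which dual or double-dual action or coaction decorates which crossed product, whether that crossed product should be full or reduced, and hence which version of \propref{invariant}, of \propref{associated ideal}, and of the duality theorems to invoke --- together with confirming that the compatible action or (nondegenerate) coaction on $A\otimes L^2$ required by \lemref{rieffel-invt} really is available, via the duality isomorphisms. Once that ledger is in order, each of the four cases is the one-line diagram chase of \lemref{trivial}.
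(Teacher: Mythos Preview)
Your proposal is correct and follows essentially the same approach as the paper's own proof: the same ladder diagram, the same identification of the long arrows with Rieffel correspondences via \propref{associated ideal} and the duality theorems, the same appeal to \lemref{rieffel-invt} for bijectivity of $u$ and $v$, and the same order-theoretic argument $f^{-1}=u^{-1}\circ g$ for the lattice-isomorphism upgrade. The only cosmetic difference is that the paper defines $u$ and $v$ directly as Rieffel correspondences and then invokes \propref{associated ideal} to show the diagram commutes, whereas you define $u=g\circ f$, $v=h\circ g$ and then identify them with the Rieffel correspondences; the content is identical.
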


\begin{proof}
  For the first part of~(a), consider the \emph{ladder diagram}
\begin{equation}\label{ladder ideal}
    \begin{tikzcd}
      &\I_G(A\rtimes_\alpha G\rtimes_{\what\alpha}
      G\rtimes_{\what{\what\alpha}} G)
      \\
      \I_G(A\rtimes_\alpha G\rtimes_{\what\alpha} G) \arrow[ur,"h"]
      \\
      &\I_G(A\rtimes_\alpha G) \arrow[ul,"g"'] \arrow[uu,"v"']
      \\
      \I_G(A) \arrow[ur,"f"] \arrow[uu,"u"]
    \end{tikzcd}
  \end{equation}
(For simplicity here we're writing $\I_G$ for the lattice of
 appropriately-invariant ideals in each $C^*$-algebra.)
 The diagonal maps $f$, $g$, and $h$ (the ``rungs'' of the ladder) 
 are defined by Proposition~\ref{invariant}; so for example
 $f(I) = I\rtimes_\alpha G$ for $I\in \I_\alpha(A)$,
 and $g(J) = J\times_{\what\alpha}G$ 
 for $J\in \I_{\what\alpha}(A\rtimes_\alpha G)$.  
The vertical maps $u$ and $v$ come from the Rieffel correspondence
using the imprimitivity bimodules implicit in the duality theorems~\ref{imaitakai} and~\ref{katayama};
since those bimodules have suitably compatible 
actions and coactions (respectively), $u$ and $v$
are bijections by 
Lemma~\ref{rieffel-invt}, and they
make the diagram commute by Proposition~\ref{associated ideal}.
It follows from Lemma~\ref{trivial} that
all the maps in the diagram are bijections. 

Now $u$ and $v$ are in fact lattice isomorphisms because
they are bijective restrictions of  lattice isomorphisms.
Moreover, it's routine to check that
any order-preserving bijection of lattices
\emph{whose inverse is also order-preserving} is a lattice isomorphism.
Since here it's evident by construction that $f$ and $g$ are order-preserving, 
we see that $f^{-1} = u^{-1}\circ g$ is too, 
and it follows that $f$ is
a lattice isomorphism.

The other part of (a), and both parts of (b), are quite similar.
\end{proof}

\section{Conclusion}\label{sec:conclusion}

We emphasize that our primary intent in this paper is to introduce the
``ladder technique''; the bijections in \thmref{ladder thm}, while
significant in their own right, are to some extent intended to serve
as illustrative examples of the technique.  
\thmref{ladder thm} itself is not completely new: 
for example, Gootman and Lazar
proved a version for amenable $G$.
Their results are clearly immediate corollaries of \thmref{ladder thm}:

\begin{thm}[{\cite[Theorems~3.4 \& 3.7]{gootman}}]\label{gootman}
  Assume that $G$ is amenable.
  \begin{enumerate}
  \item For any action $(A,\alpha)$ of $G$,
    an ideal $J$ of
    $A\rtimes_\alpha G$ is $\what\alpha$-invariant if and only if it
    is of the form $I\rtimes_\alpha G$ for an $\alpha$-invariant ideal
    $I$ of $A$.  Moreover, $I$ is uniquely determined.

  \item For any coaction $(A,\delta)$ of $G$,
    an ideal $J$ of
    $A\rtimes_\delta G$ is $\what\delta$-invariant if and only if it
    is of the form $I\rtimes_\delta G$ for a $\delta$-invariant ideal
    $I$ of $A$.  Moreover, $I$ is uniquely determined.
  \end{enumerate}
\end{thm}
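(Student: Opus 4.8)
The plan is to obtain both parts as immediate consequences of \thmref{ladder thm}; the amenability of $G$ enters only to make our results specialize literally to the Gootman--Lazar setup. For part~(a), \thmref{ladder thm}\,(a) gives that $I\mapsto I\rtimes_\alpha G$ is a bijection of $\I_\alpha(A)$ onto $\I_{\what\alpha}(A\rtimes_\alpha G)$, and the assertion is just a restatement of this: surjectivity is the ``only if'' direction, the well-definedness of the map into $\I_{\what\alpha}(A\rtimes_\alpha G)$ --- which is \propref{invariant}\,(a) --- is the ``if'' direction, and injectivity is the claimed uniqueness of~$I$. For amenable $G$ we have $A\rtimes_\alpha G=A\rtimes_{\alpha,r}G$ with $\what\alpha=\what\alpha^n$, so the two isomorphisms in \thmref{ladder thm}\,(a) coincide and there is nothing further to check.

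For part~(b) the single extra point is that \thmref{ladder thm}\,(b) should apply to \emph{every} coaction of~$G$, not merely a maximal or normal one. This is fine: when $G$ is amenable, $\lambda\colon\cstg\to C_r^*(G)$ is an isomorphism, so $(\id\xt\lambda)\circ\delta$ is injective (coactions being injective by definition), whence the normalization map $\psi_\delta$ is injective and $\delta$ is already normal. Feeding this into \thmref{ladder thm}\,(b) then yields, by exactly the same bookkeeping as in part~(a), the characterization of the $\what\delta$-invariant ideals of $A\rtimes_\delta G$ as precisely the ideals $I\rtimes_\delta G$ with $I\in\I_\delta(A)$, together with the uniqueness of~$I$.

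Accordingly, I do not expect any genuine obstacle: the proof amounts to citing \thmref{ladder thm} and unpacking what ``lattice isomorphism'' --- indeed, merely ``bijection'' --- means. The one point that warrants a remark is whether Gootman and Lazar's notion of dual-coaction invariance agrees with ours; but for a dual coaction $\what\alpha$ of an amenable group this is an honest coaction in the present sense, with an unambiguous notion of invariant ideal, so no reconciliation is required --- in contrast with the situation for Nilsen's results, which is discussed below.
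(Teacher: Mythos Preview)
Your proposal is correct and matches the paper's approach exactly: the paper simply asserts that the Gootman--Lazar theorems are ``clearly immediate corollaries of \thmref{ladder thm}'' without further argument, and you have spelled out the details --- in particular the observation that for amenable~$G$ every coaction is already normal, so that \thmref{ladder thm}\,(b) applies without restriction. There is nothing to correct.
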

\noindent
The full-crossed-product part of \thmref{ladder thm}\,(a) has also
appeared in \cite[Theorem~8.2]{pedersen2}, where it is proved using
different techniques (involving Landstad duality).

Nilsen has  proved a pair of related results
(for arbitrary locally compact~$G$) which are \emph{not}
corollaries of \thmref{ladder thm};
nevertheless, it seems relevant to include Nilsen's results here for
comparison.
One difference is that  Nilsen used a different notion of coaction-invariance for ideals:
if $(A,\delta)$
is a coaction, we say an ideal $I$ of $A$ is \emph{weakly invariant}
if $\delta$ passes to a coaction on the quotient $A/I$.
This is properly weaker than the notion of invariance used
in the current paper (although the two coincide for amenable~$G$).
More importantly, and Nilsen's bijection was different from ours: 
she maps an ideal $J$ in a coaction crossed-product $A\rtimes_\delta G$ 
to its \emph{restriction} $j_A\inv(J)\subseteq A$, and similarly for an action and $i_A\inv(J)$. Thus, for example, given a
    (non-normal) coaction $\delta$ of $G$ on $A$, Nilsen makes the
    (nonzero) kernel of the canonical homomorphism
    $j_A:A\to M(A\rtimes_\delta G)$ correspond to the zero ideal of
    $A\rtimes_\delta G$, which is \emph{not} the way the
    bijection in \thmref{ladder thm}\,(b) works.

\begin{thm}[{\cite[Corollaries~3.2 and 3.4]{nilsen}}]\label{nilsen}
Let $G$ be a locally compact group.
  \begin{enumerate}
  \item For any coaction $(A,\delta)$ of~$G$,
    restriction gives a bijection between the $\what\delta$-invariant
    ideals of $A\rtimes_\delta G$ and the weakly $\delta$-invariant
    ideals of $A$.

  \item For any action $(A,\alpha)$ of~$G$,
    restriction gives a bijection between the weakly
    $\what\alpha$-invariant ideals of $A\rtimes_\alpha G$ and the
    $\alpha$-invariant ideals of $A$.
  \end{enumerate}
\end{thm}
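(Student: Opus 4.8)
The plan is to realize ``restriction'' as one half of a Galois connection between ideal lattices and then pin down the two sublattices of fixed points between which it restricts to a bijection. For a coaction $(A,\delta)$ put $\mathrm{Res}(J)=j_A\inv(J)$ for $J\in\I(A\rtimes_\delta G)$, and let $\mathrm{Ex}(I)$ be the ideal of $A\rtimes_\delta G$ generated by $j_A(I)$ for $I\in\I(A)$. Since $\mathrm{Ex}(I)\subseteq J$ if and only if $I\subseteq\mathrm{Res}(J)$, the pair $\mathrm{Ex}\dashv\mathrm{Res}$ is a Galois connection: $\mathrm{Res}\circ\mathrm{Ex}$ is a closure operator on $\I(A)$, $\mathrm{Ex}\circ\mathrm{Res}$ is an interior operator on $\I(A\rtimes_\delta G)$, and $\mathrm{Res}$ and $\mathrm{Ex}$ restrict to mutually inverse, order-preserving bijections between the closure-fixed ideals of $A$ and the interior-fixed ideals of $A\rtimes_\delta G$. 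The action side is identical, with $i_A$ replacing $j_A$ and $\what\alpha$ replacing $\what\delta$. So the theorem reduces to two identifications: the closure-fixed ideals of $A$ are exactly the weakly $\delta$-invariant ones, and the interior-fixed ideals of $A\rtimes_\delta G$ are exactly the $\what\delta$-invariant ones (and, dually, on the action side, the $\alpha$-invariant ideals of $A$ and the weakly $\what\alpha$-invariant ideals of $A\rtimes_\alpha G$).

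The engine for both identifications is a ``quotients pass to crossed products'' principle. If $I$ is weakly $\delta$-invariant, then $\delta$ descends to a coaction $\bar\delta$ on $A/I$ with the quotient map $q\colon A\to A/I$ being $\delta$-$\bar\delta$ equivariant, so the argument of \propref{invariant}, run for an equivariant \emph{surjection} in place of an inclusion, shows that $q\rtimes G\colon A\rtimes_\delta G\to (A/I)\rtimes_{\bar\delta} G$ is a $\what\delta$-$\what{\bar\delta}$ equivariant surjection with kernel exactly $\mathrm{Ex}(I)$; hence $\mathrm{Ex}(I)$ is $\what\delta$-invariant and $\mathrm{Res}(\mathrm{Ex}(I))=q\inv(\ker j_{A/I})$. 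Conversely, for a $\what\delta$-invariant ideal $J$ one must identify $(A\rtimes_\delta G)/J$ with a coaction crossed product over $A/\mathrm{Res}(J)$ --- using $A\rtimes_\delta G=\clspn j_A(A)j_G(\cog)$ and the covariance relations to produce the comparison homomorphisms in both directions, or equivalently via a Landstad-type reconstruction of $A/\mathrm{Res}(J)$ from the quotient dual action on $(A\rtimes_\delta G)/J$ --- which forces $J=\mathrm{Ex}(\mathrm{Res}(J))$. The action case runs the same way, simplified by the fact that $i_A$ is always injective, so that $\mathrm{Res}(\mathrm{Ex}(I))$ is the smallest $\alpha$-invariant ideal containing $I$ and the closure-fixed ideals of $A$ are exactly the $\alpha$-invariant ones.

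The hard part is exactly the interplay between weak invariance and the possible non-injectivity of $j_A$ (equivalently, the failure of the descended $\bar\delta$ to be normal): because $q\inv(\ker j_{A/I})$ can strictly contain $I$, one must either fold the normality-type condition $I=\mathrm{Res}(\mathrm{Ex}(I))$ into the definition of weak invariance or verify directly that Nilsen's class of ``weakly invariant'' ideals coincides with the closure-fixed class --- and it is here that $\ker j_A$ gets matched with the zero ideal and the correspondence genuinely parts company with \thmref{ladder thm}. A second, more bureaucratic obstacle is reconciling conventions: Nilsen uses a narrower (normal or reduced) notion of coaction and a correspondingly different crossed product, so part of the work is translating her statements --- and her notion of weak invariance --- into the present setting, in which the dual coaction on a full crossed product is maximal rather than normal. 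Once the two fixed-point descriptions and this dictionary are in hand, the Galois-connection formalism delivers the bijections, and they are in fact lattice isomorphisms because $\mathrm{Res}$ and $\mathrm{Ex}$ are visibly order preserving.
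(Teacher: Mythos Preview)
The paper does not prove this theorem. It is stated in Section~\ref{sec:conclusion} purely as a citation of Nilsen's results \cite[Corollaries~3.2 and~3.4]{nilsen}, included explicitly ``for comparison'' with \thmref{ladder thm}; the authors emphasize that Nilsen's bijection is \emph{different} from theirs (restriction via $j_A\inv$ rather than $I\mapsto I\rtimes G$) and that her theorem is \emph{not} a corollary of the ladder technique. So there is no ``paper's own proof'' to compare against --- any argument here is necessarily either a reconstruction of Nilsen's or an independent one.

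As for your sketch itself: the Galois-connection framing is a clean way to organize the statement, and you correctly isolate the two things that need checking --- that the interior-fixed ideals of the crossed product are the $\what\delta$-invariant ones, and that the closure-fixed ideals of $A$ are the weakly $\delta$-invariant ones. But you do not actually carry out either identification. For the first, ``using the covariance relations to produce comparison homomorphisms in both directions, or equivalently via a Landstad-type reconstruction'' is a gesture at an argument, not an argument; this is precisely where the real work lies. For the second, you yourself flag the problem: since $j_{A/I}$ need not be injective, $\mathrm{Res}(\mathrm{Ex}(I))=q\inv(\ker j_{A/I})$ can strictly contain $I$, so it is not at all automatic that ``weakly $\delta$-invariant'' in Nilsen's sense coincides with ``closure-fixed''. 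You note that one must ``verify directly'' this coincidence but do not do so, and in fact this is exactly the delicate point that distinguishes Nilsen's correspondence from the one in \thmref{ladder thm}. Until that verification is supplied, the proposal is an outline rather than a proof.
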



\begin{bibdiv}
\begin{biblist}

\bib{maximal}{article}{
      author={Echterhoff, Siegfried},
      author={Kaliszewski, S.},
      author={Quigg, John},
       title={Maximal coactions},
        date={2004},
        ISSN={0129-167X,1793-6519},
     journal={Internat. J. Math.},
      volume={15},
      number={1},
       pages={47\ndash 61},
         url={https://doi.org/10.1142/S0129167X04002107},
      review={\MR{2039211}},
}

\bib{enchilada}{book}{
      author={Echterhoff, Siegfried},
      author={Kaliszewski, S.},
      author={Quigg, John},
      author={Raeburn, Iain},
       title={A categorical approach to imprimitivity theorems for
  {$C^*$}-dynamical systems},
        date={2006},
      volume={180},
      number={850},
         url={https://doi.org/10.1090/memo/0850},
      review={\MR{2203930}},
}

\bib{gootman}{article}{
      author={Gootman, Elliot~C.},
      author={Lazar, Aldo~J.},
       title={Applications of noncommutative duality to crossed product
  {$C^*$}-algebras determined by an action or coaction},
        date={1989},
        ISSN={0024-6115,1460-244X},
     journal={Proc. London Math. Soc. (3)},
      volume={59},
      number={3},
       pages={593\ndash 624},
         url={https://doi.org/10.1112/plms/s3-59.3.593},
      review={\MR{1014872}},
}

\bib{imaitakai}{article}{
      author={Imai, Sh\={o}},
      author={Takai, Hiroshi},
       title={On a duality for {$C\sp{\ast} $}-crossed products by a locally
  compact group},
        date={1978},
        ISSN={0025-5645,1881-1167},
     journal={J. Math. Soc. Japan},
      volume={30},
      number={3},
       pages={495\ndash 504},
         url={https://doi.org/10.2969/jmsj/03030495},
      review={\MR{500719}},
}

\bib{katayama}{article}{
      author={Katayama, Yoshikazu},
       title={Takesaki's duality for a nondegenerate co-action},
        date={1984},
        ISSN={0025-5521,1903-1807},
     journal={Math. Scand.},
      volume={55},
      number={1},
       pages={141\ndash 151},
         url={https://doi.org/10.7146/math.scand.a-12072},
      review={\MR{769030}},
}

\bib{klqfunctor}{article}{
      author={Kaliszewski, S.},
      author={Landstad, Magnus~B.},
      author={Quigg, John},
       title={Coaction functors},
        date={2016},
        ISSN={0030-8730,1945-5844},
     journal={Pacific J. Math.},
      volume={284},
      number={1},
       pages={147\ndash 190},
         url={https://doi.org/10.2140/pjm.2016.284.147},
      review={\MR{3530866}},
}

\bib{pedersen2}{article}{
      author={Kaliszewski, S.},
      author={Omland, Tron},
      author={Quigg, John},
       title={Rigidity theory for {$C^*$}-dynamical systems and the
  ``{P}edersen rigidity problem'', {II}},
        date={2019},
        ISSN={0129-167X,1793-6519},
     journal={Internat. J. Math.},
      volume={30},
      number={8},
       pages={1950038, 22},
         url={https://doi.org/10.1142/S0129167X19500381},
      review={\MR{3985232}},
}

\bib{clda}{article}{
      author={Kaliszewski, S.},
      author={Quigg, John},
       title={Categorical {L}andstad duality for actions},
        date={2009},
        ISSN={0022-2518},
     journal={Indiana Univ. Math. J.},
      volume={58},
      number={1},
       pages={415\ndash 441},
         url={https://doi-org.ezproxy1.lib.asu.edu/10.1512/iumj.2009.58.3485},
      review={\MR{2504419}},
}

\bib{nilsen}{article}{
      author={Nilsen, May},
       title={Duality for full crossed products of {$C^\ast$}-algebras by
  non-amenable groups},
        date={1998},
        ISSN={0002-9939,1088-6826},
     journal={Proc. Amer. Math. Soc.},
      volume={126},
      number={10},
       pages={2969\ndash 2978},
         url={https://doi.org/10.1090/S0002-9939-98-04598-5},
      review={\MR{1469427}},
}

\bib{nilsenbundle}{article}{
      author={Nilsen, May},
       title={Full crossed products by coactions, {$C_0(X)$}-algebras and
  {$C^*$}-bundles},
        date={1999},
        ISSN={0024-6093,1469-2120},
     journal={Bull. London Math. Soc.},
      volume={31},
      number={5},
       pages={556\ndash 568},
         url={https://doi.org/10.1112/S0024609399005883},
      review={\MR{1703865}},
}

\bib{raeburn}{incollection}{
      author={Raeburn, Iain},
       title={A duality theorem for crossed products by nonabelian groups},
        date={1987},
   booktitle={Miniconference on harmonic analysis and operator algebras
  ({C}anberra, 1987)},
      series={Proc. Centre Math. Anal. Austral. Nat. Univ.},
      volume={15},
   publisher={Austral. Nat. Univ., Canberra},
       pages={214\ndash 227},
      review={\MR{935605}},
}

\bib{tfb}{book}{
      author={Raeburn, Iain},
      author={Williams, Dana~P.},
       title={Morita equivalence and continuous-trace {$C^*$}-algebras},
      series={Mathematical Surveys and Monographs},
   publisher={American Mathematical Society, Providence, RI},
        date={1998},
      volume={60},
        ISBN={0-8218-0860-5},
         url={https://doi.org/10.1090/surv/060},
      review={\MR{1634408}},
}

\bib{danacrossed}{book}{
      author={Williams, Dana~P.},
       title={Crossed products of {$C{^\ast}$}-algebras},
      series={Mathematical Surveys and Monographs},
   publisher={American Mathematical Society, Providence, RI},
        date={2007},
      volume={134},
        ISBN={978-0-8218-4242-3; 0-8218-4242-0},
         url={https://doi.org/10.1090/surv/134},
      review={\MR{2288954}},
}

\end{biblist}
\end{bibdiv}

\end{document}